\newtheorem{theorem}{Theorem}
\newtheorem{definition}{Definition}
\newtheorem{proposition}{Proposition}
\DeclareMathAlphabet{\mathpzc}{OT1}{pzc}{m}{it}
\newcommand{\dps}{\displaystyle } 
\newcommand{\rme}{\mathrm{e}}
\newcommand{\cL}{\mathcal{L}}
\newcommand{\cLs}{\mathcal{S}}
\newcommand{\cLa}{\mathcal{A}}
\newcommand{\cLFD}{\mathcal{L}_{\rm FD}}
\newcommand{\Lovd}{\mathcal{L}_{\rm ovd}}
\newcommand{\cB}{\mathcal{B}}
\newcommand{\cD}{\mathcal{D}}
\newcommand{\R}{\mathbb{R}}
\newcommand{\cR}{\mathcal{R}}
\newcommand{\calH}{\mathcal{H}}
\newcommand{\cH}{\mathcal{H}}
\newcommand{\subplus}{\textnormal{\texttt{+}}}
\renewcommand{\leq}{\leqslant}
\renewcommand{\geq}{\geqslant}
\newcommand{\cE}{\mathcal{E}}
\newcommand{\LFD}{\mathcal{L}_{\rm FD}}
\newcommand{\Lham}{\mathcal{L}_{\rm ham}}
\newcommand{\Pinot}{\mathscr{P}}
\newcommand{\lang}{\left\langle}
\newcommand{\rang}{\right\rangle}
\renewcommand{\dim}{d}
\newcommand{\calL}{\mathcal{L}}
\newcommand{\scrD}{\mathscr{D}}
\newcommand{\Schur}{\mathfrak{S}_0}
\begin{document}

  
\title{Computational statistical physics and hypocoercivity}
\author{G. Stoltz \\
  {\small CERMICS, Ecole des Ponts, Marne-la-Vallée, France and MATHERIALS project-team, Inria Paris, France}
}
 
\maketitle

\begin{abstract}
  This note provides an introduction to molecular dynamics, the computational implementation of the theory of statistical physics. The discussion is focused on the properties of Langevin dynamics, a degenerate stochastic differential equation which can be seen as a perturbation of Hamiltonian dynamics. From an analytical point of view, the generator of Langevin dynamics is a degenerate elliptic operator. The evolution of the law of the stochastic process is governed by the Fokker-Planck equation, and its longtime convergence can be obtained via hypocoercive techniques, some of which are reviewed here. One consequence of these analytical results in terms of error estimates for the computation of average properties of molecular systems is the estimation of the asymptotic variance of time averages in a central limit theorem.
\end{abstract}

\section{A short introduction to computational statistical physics}

\subsection{Aims of molecular dynamics}

Molecular simulation has been used and developed over the past 70~years, and its number of users keeps increasing; see~\cite{AT17,FrenkelSmit,Tuckerman} for reference textbooks in the physics literature. As we understand it, it has two major aims nowadays. First, it can be used as a {\it numerical microscope}, which allows to perform ``computer'' experiments. This was the initial motivation for simulations at the microscopic level: physical theories were tested on computers. Today, understanding the behavior of matter at the microscopic level can still be difficult from an experimental viewpoint (because of the high resolution required, both in time and in space), or because we simply do not know what to look for! Numerical simulations are then a valuable tool to test some
ideas or obtain some data to process and analyze in order to help assessing experimental setups. Another major aim of molecular simulation, maybe even more important than the previous one, is to compute macroscopic quantities or thermodynamic properties, typically through averages of some functionals of the system. In this case, molecular simulation is a way to obtain \emph{quantitative} information on a system,
instead of resorting to approximate theories, constructed for simplified models, and giving only qualitative answers. More generally, molecular simulation is a tool to explore the links between the microscopic and macroscopic properties of a material, allowing to address modelling questions such as ``Which microscopic ingredients are necessary (and which are not) to observe a given macroscopic behavior?''

Physical systems are described at the microscopic level by their positions~$q \in \mathcal{D} = (L\mathbb{T})^d$ (with~$\mathbb{T} = \mathbb{R}\backslash \mathbb{Z}$ the one-dimensional torus) or $\mathbb{R}^d$, and momenta $p \in \mathbb{R}^{d}$. The associated phase-space is denoted by $\cE = \mathcal{D} \times \mathbb{R}^d$. The description of systems in statistical physics requires a fundamental
ingredient: microscopic interaction laws between the constituents of matter
and possibly the environment. The interactions between the particles are taken into
account through a potential function~$V$, depending on the positions~$q$ only.
The total energy of the system is given by the Hamiltonian
\begin{equation}
  \label{intro:Hamiltonian}
  H(q,p) = V(q) + \frac{1}{2} \, p^\top M^{-1} p,
\end{equation}
where~$M \in \mathbb{R}^{d \times d}$ is the mass matrix, and~$V:\mathbb{R}^d \to \mathbb{R}$ the potential energy function. The macroscopic state of a system is described, within the framework of
statistical physics, by a probability measure~$\mu$ on the
phase space $\cE = {\cal D} \times \mathbb{R}^{d}$. 
Macroscopic features of the system are then computed as
averages of an observable~$\varphi$ with respect to this measure:
\begin{equation}
  \label{intro:averages}
\mathbb{E}_\mu(\varphi) = \int_{\cE} \varphi(q,p) \, \mu(dq \, dp).  
\end{equation}
We therefore call the measure~$\mu$ the \emph{macroscopic state} of the system -- also know as thermodynamic ensemble.

In many physical situations, systems in contact with some energy thermostat
are considered, rather than isolated systems with a fixed energy.
In this case, the energy of the system fluctuates. The temperature (a notion which has in fact a precise definition in statistical physics) is however fixed. Microscopic configurations are then distributed according to the so-called \emph{canonical measure} 
\begin{equation}
  \label{intro:canonical}
  \mu(dq \, dp) = Z_\mu^{-1} \exp (-\beta H(q,p)) \, dq \, dp,
\end{equation}
where $\beta = 1/(k_{\rm B}T)$ ($T$ denotes the temperature and $k_{\rm{B}}$ the
Boltzmann constant), and~$Z_\mu$ is a normalization constant. The canonical measure is of the
tensorized form 
\[
\mu(dq \, dp) = \nu(dq) \, \kappa(dp),
\]
where 
\begin{equation}
  \label{intro:density}
  \nu(dq) = Z_\nu^{-1} \textrm{e}^{-\beta V(q) } \, d q,
  \qquad
  Z_\nu = \int_{\cal D} {\rm e}^{-\beta V(q)}\, dq,
\end{equation}
and~$\kappa$ is a Gaussian measure with covariance~$M/\beta$. Therefore, sampling configurations $(q,p)$ according to the canonical measure~$\mu(dq\,dp)$ can be performed by independently sampling positions according to~$\nu(dq)$ 
and momenta according to~$\kappa(dp)$. Since it is straightforward to sample from~$\kappa$, the actual issue is to sample from~$\nu$. 

\subsection{Computing average properties using stochastic differential equations}

The main mathematical challenge in computing ensemble averages such as~\eqref{intro:averages} is the very high dimensionality of the integral under consideration, which prevents the use of standard quadrature methods. In practice, the only realistic option is to rely on ergodic averages, where configurations are generated according to the probability measure~$\mu$ by integrating a dynamics, and the ensemble average of some observable~$\varphi \in L^1(\mu)$ is approximated as
\begin{equation}
  \label{eq:ergodic_avg}
  \int_\mathcal{E} \varphi \, d\mu = \lim_{t \to +\infty} \widehat{\varphi}_t, \qquad \widehat{\varphi}_t = \frac1t \int_0^t \varphi(q_s,p_s) \, ds.
\end{equation}

\subsubsection{Langevin dynamics}
We focus in these notes on the so-called Langevin dynamics
\begin{equation}
  \label{eq:Langevin}
  \left \{ \begin{aligned}
    d q_t & = M^{-1} p_t \, dt, \\
    d p_t & = - \nabla V (q_t) \, dt - \gamma \, M^{-1} p_t \, dt + \sqrt{\frac{2\gamma}{\beta}} \, dW_t,
    \end{aligned} \right.
\end{equation}
where~$W_t$ is a standard $d$-dimensional Brownian motion, and~$\gamma > 0$ the magnitude of the friction term. Note that the Langevin dynamics can be seen as a perturbation of the Hamiltonian dynamics, and in fact reduces to the Hamiltonian dynamics for~$\gamma=0$. There is a balance between the added friction term~$- \gamma \, M^{-1} p_t \, dt$ and the fluctuation term~$\sqrt{2\gamma \beta^{-1}} \, dW_t$: the magnitude of the fluctuation term is chosen so that the stochastic dynamics~\eqref{eq:Langevin} leaves the canonical measure~\eqref{intro:canonical} invariant.

In order to mathematically study the properties of the Langevin dynamics, it is useful to introduce some objects, in particular some differential operators. The time evolution of average properties is encoded by the semigroup
\[
\left(\rme^{t \cL}\varphi\right)(q,p) = \mathbb{E}\left[\varphi(q_t,p_t) \, \Big| (q_0,p_0)=(q,p)\right],
\]
with generator 
\begin{equation}
  \label{eq:generator_Langevin}
  \cL = \Lham + \gamma \LFD,
  \qquad
  \Lham = p^\top M^{-1}\nabla_q - \nabla V^\top \nabla_p,
  \qquad
  \LFD = - p^\top M^{-1}\nabla_p + \frac1\beta \Delta_p.
\end{equation}
The expression of~$\mathcal{L}$ is obtained from Itô calculus, which implies that
\begin{equation}
  \label{eq:derivative_averages}
\frac{d}{dt} \left(  \mathbb{E}\left[\varphi(q_t,p_t) \, \Big| (q_0,p_0)=(q,p)\right] \right) =  \mathbb{E}\left[(\cL \varphi)(q_t,p_t) \, \Big| (q_0,p_0)=(q,p)\right].
\end{equation}
The existence and uniqueness of the invariant measure is characterized by the property
\[
\forall \varphi \in C^\infty_{\rm c}(\cE), \qquad \int_\cE \cL \varphi \, d\mu = 0.
\]
The law of the process at time~$t$, denoted by~$\psi(t,q,p)$, evolves according to the Fokker--Planck equation
\begin{equation}
  \label{eq:FokkerPlanck}
  \partial_t \psi = \cL^\dagger \psi,
\end{equation}
where $\cL^\dagger$ is the adjoint of $\cL$ on $L^2(\cE)$. This equation is formally obtained by noting that~\eqref{eq:derivative_averages} can be rewritten as 
\[
\frac{d}{dt}\left(\int_\cE \varphi \, \psi(t) \right) = \int_\cE (\cL \varphi) \, \psi(t).
\]

From a functional analytical viewpoint, it is in fact more convenient to work in $L^2(\mu)$. In order to do so, we introduce~$f(t) = \psi(t)/\mu$, and denote the adjoint of~$\cL$ on $L^2(\mu)$ by $\cL^*$. In this functional framework,
\[
\cL^* = -\Lham + \gamma \LFD,
\qquad
\LFD = -\frac1\beta \sum_{i=1}^d \partial_{p_i}^* \partial_{p_i},
\qquad
\Lham = \frac{1}{\beta} \sum_{i=1}^d \partial_{p_i}^* \partial_{q_i} - \partial_{q_i}^* \partial_{p_i}.
\]
Indeed, a simple computation gives  
\[
\int_{\cD} \left(\partial_{q_i} \varphi\right) \phi \, d\mu = -\int_{\cD} \varphi \left(\partial_{q_i} \phi\right) d\mu - \int_{\cD} \varphi \phi \, \partial_{q_i} \left(Z_\nu^{-1} \rme^{-\beta V} \right) d\kappa,
\]
so that $\partial_{q_i}^* = -\partial_{q_i} + \beta \partial_{q_i} V$. A similar computation gives $\partial_{p_i}^* = -\partial_{p_i} + \beta (M^{-1} p)_i$. In particular, the Fokker--Planck equation rewrites
\begin{equation}
  \label{eq:FP_reformulated}
  \partial_t f = \cL^* f.
\end{equation}
Given the structure of the operators at hand (only the sign of the Hamiltonian part changes when taking adjoints), convergence results for $\rme^{t \cL}$ on $L^2(\mu)$ are very similar to the ones for $\rme^{t \cL^*}$. Convergence results for $\rme^{t \cL^*}$ can in turn be transferred to convergence results for~\eqref{eq:FokkerPlanck}.

\subsubsection{Expected rates of convergence for Langevin dynamics}

It is important to understand how the rate of convergence of the Langevin dynamics depends on the friction parameter~$\gamma>0$, in order to tune this parameter in numerical simulations to have the fastest convergence. Let us first consider the Hamiltonian (or underdamped) limit $\gamma\to 0$. A simple computation using It\^o calculus gives
\[
\frac{d}{dt}\mathbb{E}\left[H(q_t,p_t)\right] = -\gamma\left( \mathbb{E}\left[p_t^\top M^{-2} p_t\right] -\frac1\beta\mathrm{Tr}(M^{-1})\right)dt.
\]
This suggests that the typical time to change energy levels in this limit scales as $1/\gamma$; the same timescale allows in fact to sample the canonical measure, which is a function of the energy. Precise statements on the limit as~$\gamma \to 0$ of the diffusion process~$H(q_{t/\gamma},p_{t/\gamma})$, corresponding to the evolution of the energy accelerated by a factor~$1/\gamma$, are given in~\cite{HP08} for one dimensional systems.

In the overdamped limit $\gamma \to +\infty$, in the simple case when~$M = \mathrm{Id}$, it is possible to rewrite the dynamics in a suggestive form using a rescaling of time~$\gamma t$:
\[
\begin{aligned}
  q_{\gamma t} - q_0  & =  - \frac1\gamma \int_0^{\gamma t} \nabla V(q_s) \, d s + \sqrt{\frac{2}{\gamma\beta}} W_{\gamma t} - \frac1\gamma \left( p_{\gamma t} - p_0\right) \\
  & =  - \int_0^{t} \nabla V(q_{\gamma s}) \, d s + \sqrt{2\beta^{-1}} B_{t} - \frac1\gamma \left( p_{\gamma t} - p_0\right).
\end{aligned}
\]
This suggests that solutions to Langevin dynamics accelerated in time by a factor~$\gamma$ converge to solutions of the overdamped Langevin dynamics
\begin{equation}
  \label{eq:ovd}
  dQ_t = -\nabla V(Q_t)\, dt + \sqrt{\frac2\beta} \, dB_t.
  \end{equation}
From a functional analytical viewpoint, this limit is encoded in the following approximative equality, obtained by asymptotic analysis: $\mathrm{e}^{\gamma t (\Lham + \gamma \LFD)} \approx \mathrm{e}^{t \mathcal{L}_\mathrm{ovd}}$ with~$\mathcal{L}_\mathrm{ovd} = -\nabla V^\top \nabla_q + \beta^{-1} \Delta_q$.

The conclusion of the discussion of these two limiting regimes is that the convergence rate of Langevin dynamics should scale as~$\min(\gamma,\gamma^{-1})$. 

\subsubsection{Ergodicity results for Langevin dynamics}
The almost-sure convergence of the ergodic averages~$\widehat{\varphi}_t$ in~\eqref{eq:ergodic_avg} follow from he results of~\cite{Kli87} since the stochastic dynamics preserves a probability measure, and its generator is hypoelliptic~\cite{Hor67}. The asymptotic variance of these ergodic averages allows to quantify the statistical error:
\begin{equation}
  \label{eq:asymptotic_variance}
  \lim_{t \to +\infty} \mathrm{Var}\left[\widehat{\varphi}_t^2\right]
  = 2 \int_\cE \int_0^{+\infty}\!\! \left(\rme^{t \mathcal{L}}\Pinot \varphi\right) \Pinot \varphi \, dt \, d\mu
  = 2 \int_\cE \left(-\mathcal{L}^{-1}\Pinot \varphi\right) \Pinot \varphi \, d\mu
\end{equation}
where
\begin{equation}
  \label{eq:def_Pi_mu}
  \Pinot \varphi = \varphi - \mathbb{E}_\mu(\varphi),
\end{equation}
and where we used the following operator equality
\begin{equation}
  \label{eq:inv_by_int}
  -\mathcal{L}^{-1} = \int_0^{+\infty} \rme^{t \mathcal{L}} \, dt
\end{equation}
on the Hilbert space
\begin{equation}
  \label{eq:L_2_0}
  L^2_0(\mu) = \Pinot L^2(\mu) = \left\{ \varphi \in L^2(\mu) \, \left| \int_\cE \varphi \, d\mu = 0 \right. \right\}.
\end{equation}
This is legitimae when the operator norm of the semigroup~$\rme^{t\mathcal{L}}$ decays sufficiently fast, for intance exponentially. In fact, in such a setting, the Poisson equation 
\[
-\mathcal{L} \Phi = \Pinot \varphi = \varphi - \int_\mathcal{E} \varphi \, d\mu
\]
has a solution in $L^2(\mu)$, and so a central limit theorem holds~\cite{Bhattacharya}.

There are various techniques for obtaining the exponential convergence of the semigroup in Banach subspaces of~$L^2_0(\mu)$, besides the techniques we describe more precisely in Section~\ref{sec:Langevin} (see also the introduction of~\cite{BFLS20} for an extensive review):
\begin{itemize}
\item a first approach is based on Lyapunov techniques~\cite{Wu01,MSH02,rey-bellet,HM11}, which rely on convergence estimates in the Banach space
  \[
  B^\infty_\mathcal{K}(\mathcal{E}) = \left\{ \varphi \, \textrm{measurable}, \, \sup \left|\frac{\varphi}{\mathcal{K}}\right| <+\infty \right\},
  \]
  where~$\mathcal{K} : \mathcal{E} \to [1,+\infty]$ is a Lyapunov function, \emph{i.e.} a function such that~$\mathcal{L} \mathcal{K} \leq -a \mathcal{K} + b$ for some constants~$a>0$ and~$b \in \mathbb{R}$;
\item the hypocoercive framework~$H^1(\mu)$ was popularized by the monograph~\cite{Villani09}, which was building on various previous works where (iterated) commutators of the hypoelliptic generator were key tools to obtain the longtime convergence~\cite{Talay02,EH03,HN04}; 
\item convergence results in~$H^1(\mu)$ can be transferred to~$L^2(\mu)$ after hypoelliptic regularization~\cite{Herau07};
\item it was recently shown how to directly obtain convergence in~$L_0^2(\mu)$ in~\cite{AM19,CLW19,Brigati21}, based on a space-time Poincar\'e inequality involving the operator~$\partial_t - \mathcal{L}_{\rm ham}$;
\item finally, exponential convergence can also be obtained from purely probabilistic arguments based on a clever coupling between two realizations of the stochastic differential equation~\eqref{eq:Langevin}, see~\cite{EGZ19}. 
\end{itemize}
  
\section{Longtime convergence of overdamped Langevin dynamics}
\label{sec:ovd}

Before considering the more complicated case of Langevin dynamics in Section~\ref{sec:Langevin}, we start by studying the longtime convergence of overdamped Langevin dynamics~\eqref{eq:ovd}, whose generator reads 
\begin{equation}
  \label{eq:Lovd}
  \Lovd = -\nabla V(q)\cdot \nabla_q + \frac1\beta \Delta_q = -\frac1\beta \sum_{i=1}^d \partial_{q_i}^* \partial_{q_i},
\end{equation}
where adjoints in the last term are taken on~$L^2(\nu)$. It is clear from the last equality that~$\Lovd$ is a symmetric operator; in fact it is even self-adjoint~\cite{bakry-gentil-ledoux-14}. The function~$\varphi(t)=\rme^{t \Lovd}\varphi_0$ is a solution to the partial differential equation $\partial_t \varphi(t) = \Lovd \varphi(t)$. This equation preserves mass since
\[
\frac{d}{dt}\left( \int_{\cD} \varphi(t) \, d\nu \right) =  \int_{\cD} \Lovd\varphi(t) \, d\nu = \int_{\cD} \varphi(t) \left(\Lovd\mathbf{1} \right)d\nu = 0.
\]
This suggests the following longtime limit:
\[
\varphi(t) \xrightarrow[t\to+\infty]{} \int_{\cD} \varphi_0 \, d\nu.
\]
Assuming without loss of generality that~$\varphi$ belongs to the subspace $L^2_0(\nu) \subset L^2(\nu)$ of functions with average~0 with respect to~$\nu$, we are led to proving that~$\varphi(t)$ converges to~0 in~$L^2(\mu)$. We compute to this end 
\begin{equation}
  \label{eq:decay_Lovd}
\frac{d}{dt}\left(\frac12 \left\|\varphi(t)\right\|^2_{L^2(\nu)} \right) = \langle \Lovd \varphi(t), \varphi(t) \rangle_{L^2(\nu)} = -\frac1\beta \left\|\nabla_q \varphi(t) \right\|_{L^2(\nu)}^2 \leq 0.
\end{equation}
This inequality already shows that~$\left\|\varphi(t)\right\|_{L^2(\nu)}$ is non-increasing.

To prove the convergence of~$\left\|\varphi(t)\right\|_{L^2(\nu)}$ to~0, and to obtain a rate of convergence, we need further assumptions. The simplest setting to consider is when a so-called Poincar\'e inequality holds; see for instance~\cite[Chapter~4]{bakry-gentil-ledoux-14} for a very nice introduction to these inequalities.

\begin{definition}[Poincar\'{e} inequality]
  Consider the functional spaces
  \[
  L_0^2(\nu) = \left\{ \varphi \in L^2(\nu) \, \left|\, \int_\cD \varphi \, d\nu = 0 \right.\right\},
  \qquad
  H^1(\nu) = \left\{ \varphi \in L^2(\nu) \, \left|\, \nabla \varphi \in ( L^2(\nu) )^d \right. \right\}.
  \]
  The measure~$\nu$ is said to satisfy a Poincar\'e inequality with constant~$R > 0$ when
  \begin{equation}
    \label{eq:Poincare_inequality}
    \forall \varphi \in H^1(\nu) \cap L^2_0(\nu),
    \qquad 
    \| \varphi \|^2_{L^2(\nu)} \leq \frac{1}{R} \| \nabla \varphi \|^2_{L^2(\nu)}. 
  \end{equation}
\end{definition}

The constant $R > 0$ depends on the potential~$V$, the inverse temperature~$\beta$ and the domain~$\cD$. Various sufficient conditions for~$\nu$ to satisfy a Poincar\'e inequality are for instance reviewed in~\cite[Section~2.2]{LS16}. One simple example of sufficient condition is that the potential energy~$V$ can be written as the sum~$V_{\rm uc} + V_{\rm b}$ of a uniformly convex part (\emph{i.e.} $\nabla^2 V_{\rm uc} \geq \alpha \mathrm{Id}_d$ with~$\alpha>0$) and a bounded perturbation~$V_{\rm b} \in L^\infty(\cE)$. The inequality~\eqref{eq:Poincare_inequality} implies (and is in fact equivalent to) the exponential convergence to~0 of the semigroup $\rme^{t \Lovd}$ considered as an operator on $L_0^2(\nu)$. 

\begin{proposition}\label{prop:IP}
 The measure $\nu$ satisfies a Poincar\'e inequality with constant~$R > 0$ if and only if 
\begin{equation}
  \label{eq:exp_decrease_semigroup}
  \left\| \rme^{t \Lovd}  \right\|_{\cB(L_0^2(\nu))} \leq \rme^{-R t/\beta}.
\end{equation}
\end{proposition}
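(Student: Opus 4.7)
The plan is to prove the two implications separately, both hinging on the differential identity~\eqref{eq:decay_Lovd} evaluated along the semigroup.

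For the direct implication (Poincar\'e $\Rightarrow$ exponential decay), fix $\varphi_0 \in L_0^2(\nu)$ and set $\varphi(t) = \rme^{t\Lovd}\varphi_0$. Since $\Lovd\mathbf{1}=0$, the mean is preserved so $\varphi(t)\in L_0^2(\nu)$ for all $t\geq 0$, and the Poincar\'e inequality~\eqref{eq:Poincare_inequality} applies. Combining it with~\eqref{eq:decay_Lovd} yields
\[
\frac{d}{dt}\|\varphi(t)\|_{L^2(\nu)}^2 = -\frac{2}{\beta}\|\nabla\varphi(t)\|_{L^2(\nu)}^2 \leq -\frac{2R}{\beta}\|\varphi(t)\|_{L^2(\nu)}^2.
\]
Gr\"onwall's lemma then gives $\|\varphi(t)\|_{L^2(\nu)}^2 \leq \rme^{-2Rt/\beta}\|\varphi_0\|_{L^2(\nu)}^2$, which is exactly the operator norm bound~\eqref{eq:exp_decrease_semigroup} after taking square roots and a supremum over $\varphi_0$ of unit norm.

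For the converse (exponential decay $\Rightarrow$ Poincar\'e), the idea is to differentiate the decay estimate at $t=0$. Take $\varphi_0 \in H^1(\nu)\cap L_0^2(\nu)$; by standard density considerations it is enough to argue for $\varphi_0$ in the domain of $\Lovd$ (e.g.\ smooth compactly supported in the $\mathbb{R}^d$ setting, or smooth in the torus setting), then extend. Set $f(t) = \|\rme^{t\Lovd}\varphi_0\|_{L^2(\nu)}^2$ and $g(t) = \rme^{-2Rt/\beta}\|\varphi_0\|_{L^2(\nu)}^2$. By assumption $f(t) \leq g(t)$ for all $t\geq 0$, and $f(0)=g(0)$, so
\[
f'(0) = \lim_{t\to 0^+}\frac{f(t)-f(0)}{t} \leq \lim_{t\to 0^+}\frac{g(t)-g(0)}{t} = g'(0) = -\frac{2R}{\beta}\|\varphi_0\|_{L^2(\nu)}^2.
\]
On the other hand, using~\eqref{eq:decay_Lovd} at $t=0$, we have $f'(0) = -\frac{2}{\beta}\|\nabla\varphi_0\|_{L^2(\nu)}^2$. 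Comparing these two expressions gives $\|\nabla\varphi_0\|_{L^2(\nu)}^2 \geq R\|\varphi_0\|_{L^2(\nu)}^2$, which is~\eqref{eq:Poincare_inequality}.

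The principal technical point, and likely the main obstacle to write out carefully, is the justification of the one-sided derivative identity $f'(0) = -\frac{2}{\beta}\|\nabla\varphi_0\|_{L^2(\nu)}^2$ for a general $\varphi_0 \in H^1(\nu)\cap L_0^2(\nu)$: one needs either to invoke the self-adjoint functional calculus of $\Lovd$ (with $-\Lovd$ nonnegative on $L^2(\nu)$, so that $-\langle \Lovd\varphi_0,\varphi_0\rangle_{L^2(\nu)}$ coincides with $\frac{1}{\beta}\|\nabla\varphi_0\|_{L^2(\nu)}^2$ on the form domain $H^1(\nu)$), or else first establish the inequality for smooth test functions in the domain of $\Lovd$ and then conclude by density of such functions in $H^1(\nu)$ together with continuity of both sides of~\eqref{eq:Poincare_inequality} in the $H^1(\nu)$ norm. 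The rest of the argument is a straightforward application of Gr\"onwall's lemma and of an inequality-between-derivatives at a common value.
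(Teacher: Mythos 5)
Your proposal is correct and follows essentially the same route as the paper: Gr\"onwall applied to the differential identity~\eqref{eq:decay_Lovd} combined with the Poincar\'e inequality for the direct implication, and differentiation of the decay bound at $t=0$ (first for smooth functions in the domain of $\Lovd$, then by density in $H^1(\nu)\cap L^2_0(\nu)$) for the converse. The technical point you flag about justifying the one-sided derivative is handled in the paper exactly as you suggest, by restricting to $C^\infty$ compactly supported test functions and concluding by a density argument.
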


\begin{proof}
Let us first assume that the measure $\nu$ satisfies a Poincar\'e inequality with constant~$R > 0$. For $\varphi \in D(\Lovd) \cap L_0^2(\nu)$,
\begin{equation}
\label{eq:spectral_gap}
 -  \langle \Lovd \varphi, \varphi  \rangle_{L^2(\nu)} = \frac1\beta \| \nabla \varphi \|^2_{L^2(\nu)} \geq \frac{R}{\beta} \| \varphi \|_{L^2(\nu)}^2.
\end{equation}
Since 0 is an eigenvalue of the operator~$\Lovd$ (whose associated eigenvectors are constant functions), this inequality shows that the spectral gap of the self-adjoint operator $-\Lovd$ on $L^2(\nu)$ is larger than or equal to~$R/\beta$ (using a Raylegh--Ritz principle).
The inequality \eqref{eq:spectral_gap} also gives the exponential decrease of the semigroup on $L_0^2(\nu)$ since, using~\eqref{eq:decay_Lovd} and~\eqref{eq:Poincare_inequality},
\begin{equation}
  \label{eq:derivative_L2_norm_squared}
  \frac{d}{d t}\left(\frac12  \left\| \rme^{t \Lovd} \varphi \right\|^2_{L^2(\nu)} \right) =  \left\langle \rme^{t \Lovd} \varphi, \Lovd \,\rme^{t \Lovd} \varphi \right\rangle_{L^2(\nu)} \leq - \frac{R}{\beta} \left\| \rme^{t \Lovd} \varphi \right\|^2_{L^2(\nu)}.
\end{equation}
By a Gronwall inequality, it follows that~$\left\| \rme^{t \Lovd}\varphi  \right\|_{L_0^2(\nu)} \leq \rme^{-R t/\beta} \| \varphi \|_{L_0^2(\nu)}$. The bound is finally extended to all functions in~$L^2_0(\nu)$ by density.

Assume now that $ \| \rme^{t \Lovd}  \|_{\cB(L_0^2(\nu))} \leq \rme^{-R t/\beta}$. Then, for a given $\varphi \in L^2_0(\nu)$ and any $t > 0$,
\[
\frac{ \left\| \rme^{t \Lovd}\varphi \right\|_{L^2_0(\nu)}^2 - \left\| \varphi \right\|_{L^2_0(\nu)}^2 }{t} \leq \| \varphi \|_{L^2_0(\nu)}^2 \, \frac{\rme^{-2R t/\beta} - 1}{t}.
\]
We next restrict ourselves to a $C^\infty$ function with compact support, and pass to the limit $t \to 0$, using the equalities in \eqref{eq:spectral_gap} and \eqref{eq:derivative_L2_norm_squared}:
\[
-\frac2\beta \| \nabla \varphi \|^2_{L^2(\nu)} \leq -\frac{2R}{\beta}\| \varphi \|_{L^2_0(\nu)}^2.
\]
The Poincar\'e inequality \eqref{eq:Poincare_inequality} finally follows by a density argument.
\end{proof}

Let us emphasize that, crucially, the prefactor for the exponential convergence in~\eqref{eq:exp_decrease_semigroup} is~1. Note also that the convergence rate is not degraded when one adds to the generator an antisymmetric part~$\cLa = F\cdot \nabla$ with $\mathrm{div}(F \rme^{-\beta V}) = 0$ since~\eqref{eq:spectral_gap} still holds with~$\Lovd$ replaced by~$\Lovd+\cLa$.

\section{Longtime convergence of hypocoercive ordinary differential equations}

We present in this section the spirit of hypocoercive methods, illustrated on the possibly simplest example, namely the two dimensional ordinary differential equation $\dot{X} = L X \in \mathbb{R}^2$ with, for $\gamma > 0$,
\[
-L = A + \gamma S, \qquad A = \begin{pmatrix} 0 & 1 \\ -1 & 0 \end{pmatrix}, \qquad S = \begin{pmatrix} 0 & 0 \\ 0 & 1 \end{pmatrix}.
\]
Solutions of this ordinary differential equation are presented in Figure~\ref{fig:ODE}. The structure of $-L$ has features typical of hypocoercive operators: the symmetric part~$S$ is positive but degenerate, while the antisymmetric part~$A$ couples the kernel and the image of~$S$. This simple example can be treated analytically: a straightforward computation reveals that the smallest real part of the eigenvalues of~$-L$ (the spectral gap) is of order~$\min(\gamma,\gamma^{-1})$. In fact, the determinant of~$-L$ is~1, the trace~$\gamma$, so the eigenvalues are
\[
\lambda_\pm = \frac\gamma2 \pm \left(\frac{\gamma^2}{4}-1\right)^{1/2},
\]
where the square root is understood as a complex number when~$\gamma \leq 2$.
To obtain the longtime convergence of $\rme^{tL}$, we can diagonalize the evolution operator as
\[
\rme^{t L} = U^{-1} \begin{pmatrix} \rme^{-t \lambda_+} & 0 \\ 0 & \rme^{-t\lambda_-} \end{pmatrix} U.
\]
The decay rate is provided by the spectral gap~$\lambda = \min\{\mathrm{Re}(\lambda_-), \mathrm{Re}(\lambda_+)\}$, equal to~$\gamma/2$ for~$\gamma \leq 2$, and to $2/(\gamma + \sqrt{\gamma^2-4})$ for~$\gamma \geq 2$:
\begin{equation}
  \label{eq:ODE_decay}
  |X(t)| = \left|\rme^{t L} X(0)\right| \leq C \rme^{-\lambda t}|X(0)|.
\end{equation}

\begin{figure}
  \begin{center}
  \includegraphics[width=0.5\textwidth]{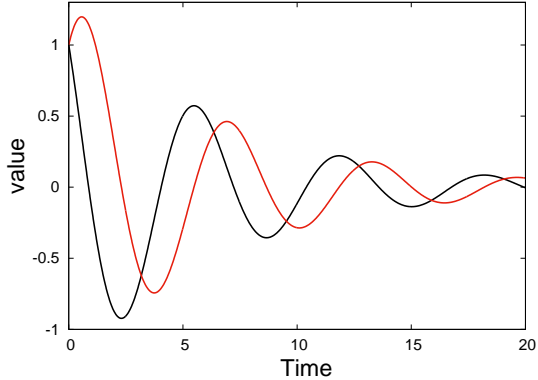}
  \end{center}
\caption{\label{fig:ODE} Values of $X_1(t)$ (black) and~$X_2(t)$ (red) for $X(0)=(1,1)$ and $\gamma=0.5$.}
\end{figure}

Let us now try to prove the decay estimate~\eqref{eq:ODE_decay} using the finite dimensional counterpart of the method used in Section~\ref{sec:ovd}. The starting point is the decay equality 
\[
\frac{d}{dt}\left(\frac12 |X(t)|^2 \right) = -\gamma X(t)^\top S X(t) = -\gamma X_2(t)^2 \leq 0.
\]
Note that, although the derivative of the squared norm is nonpositive, it is not negative, because dissipation in the~$X_1$ variable is missing. We cannot therefore conclude from this equality that $|X(t)|$ converges to~0. The key idea is to change the scalar product with some positive definite matrix~$P$:
\[
|X|^2_P = X^\top P X, \qquad \frac{d}{dt}\left(|X(t)|_P^2 \right) = X(t)^\top \left(PL+L^\top P\right) X(t).
\]
We then choose~$P$ to couple~$X_1$ and~$X_2$. We start perturbatively, by adding a small term to the identity matrix as
\[
P = \mathrm{Id} - \varepsilon \begin{pmatrix}0 & 1 \\ 1 & 0 \end{pmatrix}.
\]
In this case, 
\[
-\left(PL+L^\top P\right) = 2 \gamma P S+ 2 \varepsilon \begin{pmatrix} 1 & 0 \\ 0 & -1 \end{pmatrix} \approx 2\begin{pmatrix}\varepsilon & 0 \\ 0 & \gamma \end{pmatrix}.
\]
This provides some (small) dissipation in~$X_1$. An optimal choice for~$P$ is proposed in~\cite{AAS15}. It is based on the informal statement ``$L^\top P \geq \lambda P$'', which motivates constructing~$P$ from a diagonalization of~$L^\top$:
\[
P = a_- X_- \overline{X}_-^\top + a_+ X_+ \overline{X}_+^\top, \qquad a_\pm > 0, \qquad L^\top X_\pm = \lambda_\pm X_\pm.
\]
With this choice, $-(PL + L^\top P) \geq 2\lambda P$. Therefore, $|X(t)|_P^2 \leq \rme^{-2\lambda t}|X_0|_P^2$, and so, by equivalence of scalar products,
\[
|X(t)| \leq \min\left(1, C \rme^{-\lambda t}\right)|X_0|,
\]
which is consistent with~\eqref{eq:ODE_decay}. Note that the prefactor $C \geq 1$ in this inequality is really needed, since, similarly to Proposition~\ref{prop:IP}, it can easily be shown that exponential convergence holds with $C=1$ if and only if $-L$ is coercive (\emph{i.e.} $-X^\top LX \geq \alpha |X|^2$ with $\alpha>0$).

\section{Longtime convergence of Langevin dynamics}
\label{sec:Langevin}

\subsection{Lack of coercivity}

Solutions to the Fokker--Planck equation~\eqref{eq:FP_reformulated} are expected to converge to the constant function~$\mathbf{1}$. Upon subtracting this constant function from the initial condition~$f_0$, the convergence of the law amounts to the convergence to~0 of $\rme^{t \cL^*}(f_0 - \mathbf{1})$. This motivates again working on the space~$L^2_0(\mu)$ introduced in~\eqref{eq:L_2_0}. The same functional space is considered when studying the convergence of~$\rme^{t \cL}\varphi$ towards its limiting value~$\mathbb{E}_\mu(\varphi)$, since~$\rme^{t \cL}\varphi - \mathbb{E}_\mu(\varphi) = \rme^{t \cL}\Pinot \varphi$ (with~$\Pinot$ the projector defined in~\eqref{eq:def_Pi_mu}), so that it is sufficient to prove that~$\rme^{t \cL}\varphi$ converges to~0 for functions~$\varphi \in L^2_0(\mu)$. 

The important remark which motivates the title of this section is that the generator~$\cL$ of the Langevin dynamics~\eqref{eq:generator_Langevin} fails to be coercive on $L_0^2(\mu)$ since second derivatives in~$q$ are missing. In fact, for $C^\infty$ and compactly supported test functions~$\varphi$, we obtain that
\begin{equation}
  \label{eq:fails_to_be_coercive}
  -\langle \cL \varphi,\varphi\rangle_{L^2(\mu)} = \frac\gamma\beta \| \nabla_p \varphi \|^2_{L^2(\mu)},
\end{equation}
which should be compared to~\eqref{eq:decay_Lovd} for overdamped Langevin dynamics. The key idea of hypocoercivity is to modify the canonical~$L^2(\mu)$ scalar product to introduce some mixed derivatives in~$q$ and~$p$ in order to retrieve some dissipation in~$q$ through some commutator identities. This idea was already present in the computations performed in~\cite[Section~3]{Talay02}, and was later generalized in~\cite{Villani09}. This motivates the name for the technique in view of the analogy with hypoellipticity, since (more or less explicit) commutator identities allow to recover some form of coercivity for operators with degenerate diffusion parts, in the same way commutators identities in H\"ormander's theory~\cite{Hor67} imply hypoellipticity and therefore allow to recover regularity results for operators with degenerate diffusion parts similar to the regularity results for elliptic operators.

\subsection{An almost direct~$L^2$ approach}

We present in this section a way to prove the exponential decay of the semigroup $\rme^{t\cL}$ in $L^2(\mu)$, by modifying the scalar product with some operator involving the generator of the Hamiltonian part of the dynamics. This approach was first proposed in~\cite{Herau06} and then extended in~\cite{DMS09,DMS15}. It is more direct than first proving a decay estimate in~$H^1(\mu)$ and then transfering this decay to~$L^2(\mu)$ by hypoelliptic regularization (see~\cite{Villani09,Herau07} as well as the review of these approaches in~\cite{LS16}), or by some spectral argument (using the bounded self-adjoint operators $Q_t = \rme^{t\cL^*}\rme^{t\cL}$, as done in~\cite{DPD18} by resorting to~\cite[Lemma~2.9]{HSV14}). It also turns out to be more robust to perturbations, since it can be used for nonequilibrium systems in a perturbative framework~\cite{BHM17,IOS19} or for spectral discretization of the Langevin dynamics~\cite{RS18}. It also allows to quantify more easily the convergence rate in terms of the parameters of the dynamics, in particular the friction rate~\cite{DKMS13,GS16}.

As mentioned above, for notational simplicity, we study the convergence to~0 of $\rme^{t\cL}\varphi$ for $\varphi \in L^2_0(\mu)$ rather than the convergence to~0 of $\rme^{t\cL^*}(f_0-1)$ for $f_0 \in L^2(\mu)$.

\begin{theorem}[Hypocoercivity in $L^2(\mu)$]
  \label{th:hypocoercivity}
  Assume that $\nu(dq) = Z_\nu^{-1} \rme^{-\beta V(q)} \, dq$ satisfies the Poincar\'e inequality~\eqref{eq:Poincare_inequality} with a constant~$R_\nu>0$, and that~$V \in C^\infty(\cD)$ is such that there exist $c_1 > 0$, $c_2 \in [0,1]$ and $c_3 > 0$ for which 
  \begin{equation}
    \label{eq:regularization condition_d}
    \Delta V \leq c_1 \dim + \frac {c_2 \beta} 2 | \nabla V |^2, \qquad \left|\nabla^2 V \right|^2 = \sum_{i,j=1}^d \left|\partial_{q_i}\partial_{q_j}V\right|^2 \leq c_3^2 \left( \dim + | \nabla V |^2 \right).
  \end{equation}
  Then there exist $C > 1$ and $\lambda_\gamma > 0$ (which are explicitly computable in terms of the parameters of the dynamics, $C$ being independent of $\gamma>0$) such that, for any $\varphi \in L^2_0(\mu)$,
  \begin{equation}
    \label{eq:cv_expo_L2}
    \forall t \geq 0, \qquad \left\| \rme^{t \mathcal{L}} \varphi \right\|_{L^2(\mu)} \leq C \rme^{-\lambda_\gamma t} \| \varphi \|_{L^2(\mu)}.
  \end{equation}
  Moreover, the convergence rate is of order $\min(\gamma,\gamma^{-1})$: there exists $\overline{\lambda} > 0$ such that
  \[
  \lambda_\gamma \geq \overline{\lambda} \min(\gamma,\gamma^{-1}).
  \]
\end{theorem}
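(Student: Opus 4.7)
The plan is to implement the Dolbeault--Mouhot--Schmeiser strategy alluded to after Theorem~\ref{th:hypocoercivity}, which is the infinite-dimensional analog of the finite-dimensional trick explained in Section~3: replace the $L^2(\mu)$ inner product by an equivalent one with respect to which $-\cL$ is coercive. Let $\Pi$ denote the orthogonal projection in $L^2(\mu)$ onto functions of $q$ alone, \emph{i.e.}~$(\Pi \varphi)(q) = \int_{\R^d}\varphi(q,p)\,\kappa(dp)$, and set
\[
\cR = \bigl(\Id + (\Lham \Pi)^*(\Lham \Pi)\bigr)^{-1}(\Lham \Pi)^*,
\]
which satisfies $\|\cR\|_{\cB(L^2(\mu))} \leq 1/2$ by functional calculus. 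For a parameter $\eta \in (0,1)$ to be tuned, I would consider the modified squared norm
\[
\lang \varphi, \varphi\rang_\eta = \|\varphi\|_{L^2(\mu)}^2 + \eta\bigl(\lang \cR \varphi, \varphi\rang_{L^2(\mu)} + \lang \varphi, \cR \varphi\rang_{L^2(\mu)}\bigr),
\]
which for $\eta$ small enough is equivalent to the canonical $L^2(\mu)$-norm.

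The next step is the decay computation. Using $\Lham^* = -\Lham$ and $\LFD^* = \LFD$, one writes
\[
-\frac12 \frac{d}{dt} \lang \varphi_t, \varphi_t\rang_\eta = \frac{\gamma}{\beta}\|\nabla_p \varphi_t\|_{L^2(\mu)}^2 + \eta \lang (\cR \Lham)_{\mathrm{s}} \varphi_t, \varphi_t\rang_{L^2(\mu)} + \gamma\eta \lang (\cR \LFD)_{\mathrm{s}} \varphi_t, \varphi_t\rang_{L^2(\mu)}
\]
for $\varphi_t = \rme^{t\cL}\varphi$, with subscript $\mathrm{s}$ denoting the symmetric part. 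The first term is the dissipation in $p$ seen in~\eqref{eq:fails_to_be_coercive}. Splitting the second as $\Lham = \Lham\Pi + \Lham(\Id-\Pi)$, the main piece
\[
\eta\lang \cR\Lham\Pi \varphi_t, \varphi_t\rang_{L^2(\mu)} = \eta \lang (\Lham\Pi)^*(\Lham\Pi)\bigl(\Id + (\Lham\Pi)^*(\Lham\Pi)\bigr)^{-1}\varphi_t, \varphi_t\rang_{L^2(\mu)}
\]
is nonnegative and, combined with the Poincar\'e inequality for $\nu$ (which shows that $(\Lham\Pi)^*(\Lham\Pi)$ has a spectral gap on $\Ran(\Pi) \cap L^2_0(\mu)$), yields dissipation proportional to $\|\Pi\varphi_t\|_{L^2(\mu)}^2$. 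A Gaussian Poincar\'e inequality in $p$ then converts the $\nabla_p$-dissipation into control of $\|(\Id-\Pi)\varphi_t\|_{L^2(\mu)}^2$, so together these two good terms cover all of $L^2_0(\mu)$.

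The main technical work is to absorb the remaining cross terms $\eta\lang (\cR\Lham(\Id-\Pi))_{\mathrm{s}}\varphi,\varphi\rang$ and $\gamma\eta\lang (\cR\LFD)_{\mathrm{s}}\varphi,\varphi\rang$ into the two positive terms. This requires operator bounds such as $\|\cR\Lham(\Id-\Pi)\|_{\cB(L^2(\mu))} < \infty$ and $\|\LFD\cR\|_{\cB(L^2(\mu))} < \infty$, which in turn reduce to $L^2(\mu)$-estimates on $\nabla_p \cR$ and $\nabla_q \cR$; this is precisely where the regularity hypothesis~\eqref{eq:regularization condition_d} enters, controlling the derivatives involving $\nabla^2 V$ and $\Delta V$ that arise from the commutators with $\Lham$ hidden in $\cR$. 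After these bounds are established, Cauchy--Schwarz and Young inequalities allow to estimate the cross terms by $C\eta(1+\gamma)$ times a multiple of $\|\nabla_p\varphi\|_{L^2(\mu)}^2 + \|\Pi\varphi\|_{L^2(\mu)}^2$, which, for $\eta$ small enough, is absorbed in the two positive terms.

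The main obstacle, and where one must be careful, is tracking the dependence on $\gamma$ in order to obtain the scaling $\lambda_\gamma \geq \overline{\lambda}\min(\gamma,\gamma^{-1})$. The $p$-dissipation is of order~$\gamma$; the $q$-dissipation produced by $\cR$ is of order~$\eta$; the cross term involving $\cR\LFD$ is of order~$\gamma\eta$; and the one involving $\cR\Lham(\Id-\Pi)$ is of order~$\eta$. Choosing $\eta$ proportional to $\min(\gamma,\gamma^{-1})$ (and sufficiently small) ensures that all cross terms are dominated, leaving a coercivity constant of order $\min(\gamma,\gamma^{-1})$ with respect to $\|\cdot\|_\eta$. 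A Gr\"onwall argument then yields exponential decay of $\|\cdot\|_\eta$, and the equivalence of norms $(1-\eta)\|\cdot\|_{L^2(\mu)}^2 \leq \|\cdot\|_\eta^2 \leq (1+\eta)\|\cdot\|_{L^2(\mu)}^2$ produces the prefactor~$C>1$ of~\eqref{eq:cv_expo_L2}, while $\overline{\lambda}$ can be read off the explicit coercivity estimate.
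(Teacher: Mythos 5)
Your proposal is correct and follows essentially the same route as the paper: the H\'erau/Dolbeault--Mouhot--Schmeiser modification of the $L^2(\mu)$ scalar product by the very same operator $R=\big(1+(\Lham\Pi_0)^*(\Lham\Pi_0)\big)^{-1}(\Lham\Pi_0)^*$, coercivity of $R\Lham\Pi_0$ on functions of $q$ via the Poincar\'e inequality for $\nu$, a Gaussian Poincar\'e inequality in $p$ for the complement $(1-\Pi_0)\varphi$, control of the cross/remainder terms through elliptic regularity for $1+\nabla_q^*\nabla_q$ (which is exactly where~\eqref{eq:regularization condition_d} enters), and the choice of the coupling parameter of order $\min(\gamma,\gamma^{-1})$ to obtain the stated scaling of $\lambda_\gamma$. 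The only slip is a sign convention: with your definition of $\cR$ the coupling term should be subtracted, as in the paper's $\calH[\varphi]=\tfrac12\|\varphi\|_{L^2(\mu)}^2-\varepsilon\lang R\varphi,\varphi\rang$, so that the $q$-dissipation term $\varepsilon\lang R\Lham\Pi_0\varphi,\varphi\rang\geq 0$ enters the decay identity with the favorable sign (your derivative computation implicitly assumes this correct sign).
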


Note that some prefactor~$C > 1$ appears in~\eqref{eq:cv_expo_L2}, compared to the estimates~\eqref{eq:exp_decrease_semigroup} for overdamped Langevin dynamics. The scaling with respect to the dimension~$\dim$ of the constants in the bounds~\eqref{eq:regularization condition_d} is motivated by the case of separable potentials for which $V(q) = v(q_1) + \dots + v(q_d)$ for some smooth one dimensional function~$v$, which corresponds to tensorized probability measures. The bounds~\eqref{eq:regularization condition_d} then follow from the inequalities
\[
  v'' \leq c_1 + \frac{c_2 \beta}{2} (v')^2, \qquad \left|v''\right|^2 \leq c_3^2 \left( 1+\left|v'\right|^2 \right).
\]
These bounds generally hold if $v$ has polynomial growth for example. The scaling of the constants should be similar for particles on a lattice (such as one dimensional atom chains) with finite interaction ranges, or systems for which correlations between degrees of freedom are bounded with respect to the dimension, in the sense that each column/line of the matrix $\nabla^2 V$ has a finite number of nonzero entries.

The convergence result of Theorem~\ref{th:hypocoercivity} can be (formally) extended to more general Hamiltonian functions, in particular separable Hamiltonians $H(q,p)=V(q)+U(p)$ under appropriate assumptions on~$U$, namely some moment conditions for derivatives of~$U$ and a Poincar\'e inequality for the probability measure with density proportional to~$\rme^{-\beta U}$; see~\cite{ST18} for precise statements. Let us emphasize that we do not need the generator to be hypoelliptic, though, and can allow for instance for kinetic energy functions which vanish on open sets. In fact, Theorem~\ref{th:hypocoercivity} can be extended to certain Piecewise Deterministic Markov Processes, see~\cite{DMS09,DMS15,ADNR18}. 

Let us sketch the proof of Theorem~\ref{th:hypocoercivity}. The first step is to consider an appropriate change of scalar product, which uses the antisymmetric part~$\Lham$ of the generator. More precisely, we consider the modified squared norm
\[
\calH[\varphi] = \frac 1 2 \|\varphi\|_{L^2(\mu)}^2 - \varepsilon \lang R \varphi, \varphi \rang,
\qquad 
R = \Big( 1 + (\Lham \Pi_0)^* (\Lham \Pi_0) \Big)^{-1} (\Lham \Pi_0)^*, 
\]
where~$\Pi_0$ is the projector whose action is to integrate out the~$p$ variable according to~$\kappa$:
\[
(\Pi_0 \varphi)(q) = \int_{v \in \R^d} \varphi(q,p) \, \kappa(dp).
\]
It is shown in~\cite{DMS15} for instance that $R = \Pi_0 R (1-\Pi_0)$ and $\Lham R$ are bounded operators on~$L^2(\mu)$ (with bounds smaller than~$1/2$ and~1, respectively), and that the modified square norm~$\calH$ is equivalent to the standard squared norm~$\|\cdot\|_{L^2(\mu)}^2$ for $\varepsilon \in (-1,1)$. The motivation for the expression of the regularization operator~$R$ is that
\[
(\Lham \Pi_0)^* (\Lham \Pi_0) = \beta^{-1} \nabla_q^* \nabla_q
\]
is an operator coercive in the~$q$ variable, so that
\[
R \Lham \Pi_0 = \frac{(\Lham \Pi_0)^* (\Lham \Pi_0)}{1 + (\Lham \Pi_0)^* (\Lham \Pi_0)}
\]
is coercive as well when acting on functions of the position variable only. The coercivity comes here from spectral calculus and the Poincar\'e inequality for~$\nu$, which translates into the inequality $\nabla_q^* \nabla_q \geq K_\nu^2 \Pi_0$ in the sense of symmetric operators.

The key result to prove the coercivity of~$-\mathcal{L}$ in the modified scalar product~$\lang \lang\cdot,\cdot\rang\rang$ induced by~$\calH$ is the following dissipation inequality:
\begin{equation}
  \label{eq:dissipation}
\mathscr{D}[\varphi] := \lang \lang -\calL \varphi, \varphi \rang \rang \geq \lambda \|\varphi\|^2.
\end{equation}
Since
\[
\frac{d}{dt}\left(\calH\left[\rme^{t \calL}\varphi\right]\right) = -\scrD\left[\rme^{t \calL}\varphi\right] \leq -\frac{2\lambda}{1+\varepsilon} \calH\left[\rme^{t \calL}\varphi\right],
\]
a Gronwall inequality allows to conclude to the exponential convergence of~$\calH\left[\rme^{t \calL}\varphi\right]$ to~0, which in turn leads to the estimate~\eqref{eq:cv_expo_L2}. The proof of the dissipation inequality~\eqref{eq:dissipation} motivates why the regularization operator was chosen. Indeed, upon controlling remainder terms which are not explicitly written in the inequalities below (thanks to elliptic regularity estimates for the operator~$1+\nabla_q^*\nabla_q$ considered on~$L^2(\nu)$), 
  \[
  \begin{aligned}
    \mathscr{D}[\varphi] & = \gamma\lang -\LFD \varphi, \varphi \rang + \varepsilon \lang R \Lham \Pi_0 \varphi, \varphi \rang + \mathrm{O}(\gamma\varepsilon) \\
    & = \frac\gamma\beta \| \nabla_p \varphi \|_{L^2(\mu)}^2 + \varepsilon \lang \frac{\nabla_q^* \nabla_q}{\beta + \nabla_q^* \nabla_q}\Pi_0 \varphi, \Pi_0\varphi \rang + \mathrm{O}(\gamma\varepsilon) \\
    & \geq \frac{\gamma K^2_\kappa}{\beta} \| (1 - \Pi_0) \varphi \|_{L^2(\mu)}^2 + \frac{\varepsilon K_\nu^2}{\beta + K_\nu^2} \|\Pi_0 \varphi\|_{L^2(\mu)}^2 + \mathrm{O}(\gamma\varepsilon),
  \end{aligned}
  \]
where we used in the last step the operator inequality~$\nabla_p^* \nabla_p \geq K_\kappa^2 (1-\Pi_0)$, which is a direct consequence of the fact that~$\kappa$ satisfies a Poincaré inequality. The scaling of the exponential convergence rate finally follows from the fact that~$\varepsilon$ is of the order~$\min(\gamma,\gamma^{-1})$, a choice which follows from a careful inspection of the remainder terms.

\subsection{Directly obtaining bounds on the resolvent}

We present in this section the results of~\cite{BFLS20}, which allow to direcly obtain bounds on the generator~$\cL^{-1}$, and hence on the solutions to the Poisson equation~\eqref{eq:def_Pi_mu} and on the asymptotic variance~\eqref{eq:asymptotic_variance}. The starting point is to realize that typical hypocoercive operators on~$L^2_0(\mu)$ have a ``saddle-point like'' structure: 
\[
\cL = \begin{pmatrix} 0 & \cLa_{0\subplus}\\
  \cLa_{\subplus0} & \cL_{\subplus\subplus} \end{pmatrix}, \qquad \cH = \cH_0 \oplus \cH_{\subplus}, \qquad \cH_0 = \Pi_0 \cH, 
\]
where~$\cLa$ is the antisymmetric part of the generator (for Langevin dynamics, $\cLa = \cL_{\mathrm{ham}}$), and, denoting by $\Pi_{\subplus} = 1 - \Pi_{0}$ the orthogonal projector complementary to~$\Pi_{0}$, the operators~$T_{\alpha\beta} =  \Pi_{\alpha} T \Pi_{\beta}:\, \cH_{\beta} \to \cH_{\alpha}$ are the restrictions (blocks) of a given operator~$T$. The formal inverse of~$\cL$ can be written in terms of the Schur complement $\Schur = \cLa_{\subplus 0}^* \cL_{\subplus\subplus}^{-1} \cLa_{\subplus0}$ as
\[
\cL^{-1} =
\begin{pmatrix}
  \Schur^{-1} & -\Schur^{-1} \cLa_{0\subplus} \cL_{\subplus\subplus}^{-1}\\
  -\cL_{\subplus\subplus}^{-1} \cLa_{\subplus0} \Schur^{-1}  & \cL_{\subplus\subplus}^{-1} + \cL_{\subplus\subplus}^{-1} \cLa_{\subplus0} \Schur^{-1} \cLa_{0\subplus} \cL_{\subplus\subplus}^{-1}
\end{pmatrix}.
\]
The invertibility of~$\Schur$ is the crucial element to make the above formal computations rigorous. Two ingredients are used to this end: (i) some coercivity of the symmetric part of the generator on~$\cH_\subplus$, which writes
\[
-\cLs = -\frac12 (\cL+\cL^*) \geq s \Pi_{\subplus} = s(1-\Pi_0).
\]
For Langevin dynamics where $\cLs = \gamma \cL_{\mathrm{FD}}$, this amounts to requiring that~$\kappa(dp)$ satisfies a Poincar\'e inequality; (ii) a property named ``macroscopic coercivity'' in~\cite{DMS15}, namely
\[
\|\cLa_{\subplus 0} \varphi\|_{L^2(\mu)} \geq a \| \Pi_0 \varphi\|_{L^2(\mu)},
\]
and which amounts to the inequality~$\cLa_{\subplus 0}^*\cLa_{\subplus0} \geq a^2 \Pi_0$ in the sense of symmetric operators. For Langevin dynamics, this is equivalent to a Poincar\'e inequality for~$\nu(dq)$, with $a^2 = K_\nu^2/\beta$. In fact, it turns out, for the proof, to be necessary to further decompose~$\cL$ using the projector~$\Pi_{1} = \cLa_{\subplus0} \left(\cLa_{\subplus0}^*\cLa_{\subplus0}\right)^{-1} \cLa_{\subplus0}^*$ onto the range of~$\cLa_{\subplus0} = (1-\Pi_0)\cLa \Pi_0$:
\[
\cL = \begin{pmatrix} 0 & \cLa_{01} & 0 \\
  \cLa_{10} & \cL_{11} & \cL_{12} \\
  0 & \cL_{21} & \cL_{22}
\end{pmatrix},
\qquad
\cLa_{01} = -\cLa_{10}^{*}.
\]
Additional structural assumptions are also needed, namely that there exists an involution~$\cR$ on~$\cH$ such that~$\cR \Pi_{0} = \Pi_{0} \cR = \Pi_{0}$, $\cR \cLs\cR  = \cLs$, and $\cR \cLa \cR = - \cLa$.
Abstract resolvent estimates can then be obtained when the operators $\cLs_{11}$ and $\cL_{21}\cLa_{10}\left(\cLa_{\subplus0}^* \cLa_{\subplus0}\right)^{-1}$ are bounded. The application to Langevin dynamics with a quadratic kinetic energy gives the following result (see Corollary~1 and Proposition~1 in~\cite{BFLS20}). 

\begin{theorem}
  \label{thm:Schur}
  Suppose that~$M = m \mathrm{Id}_\dim$ and that~$V \in C^\infty(\cD)$ satisfies the same assumptions as in Theorem~\ref{th:hypocoercivity}, namely~\eqref{eq:Poincare_inequality} and~\eqref{eq:regularization condition_d}. Then, the operator~$\cL$ is invertible on~$L^2_0(\mu)$ and the following bound holds:
  \begin{equation}
    \label{eq:CL_inv_bound_Schur}
    \left\|\cL^{-1}\right\|_{\cB(L^2_0(\mu))} \leq \frac{2\beta\gamma}{R_\nu} + \frac{8 m}{\gamma} \left( \frac38 + C + \frac{C'}{R_\nu}\right),
  \end{equation}
  where $C$ and $C^{'}$ can be chosen as:
\begin{enumerate}[(i)]
\item If $V$ is convex, then $C=1$ and $C^{'}=0$;
\item If $\nabla_q^2 V \geq -K \mathrm{Id}$ for some $K \geq 0$, then $C=1$ and $C^{'}=K$;
\item In the general case, $C=2$ and $\dps C' = \mathrm{O}(\sqrt{d})$. 
\end{enumerate}
\end{theorem}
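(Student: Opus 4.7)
The plan is to apply the abstract resolvent framework from~\cite{BFLS20} outlined just above the statement, by identifying $\cLs = \gamma\LFD$, $\cLa = \Lham$, with $\Pi_0$ the conditional expectation against $\kappa$, and choosing the momentum-reversal involution $(\cR\varphi)(q,p) = \varphi(q,-p)$. This involution fixes $\Pi_0$, satisfies $\cR\Lham\cR = -\Lham$ and $\cR\LFD\cR = \LFD$ (since $\LFD$ is even in $p$), as required. It then remains to verify microscopic and macroscopic coercivity with explicit constants, bound the two auxiliary operators $\cLs_{11}$ and $\cL_{21}\cLa_{10}(\cLa_{\subplus0}^*\cLa_{\subplus0})^{-1}$, and substitute the resulting constants into the abstract bound.

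The coercivity inputs are quick to obtain. Since $\kappa$ is Gaussian with covariance $(m/\beta)\mathrm{Id}_\dim$, it satisfies a Poincaré inequality with constant $\beta/m$, yielding $-\cLs \geq (\gamma/m)(1-\Pi_0)$, so $s = \gamma/m$. On the macroscopic side, for $g = \Pi_0 \varphi$ one has $\cLa_{\subplus0}g = m^{-1} p\cdot\nabla g$; computing the $L^2(\mu)$ norm by integrating $p$ against $\kappa$ first gives $\|\cLa_{\subplus0}g\|_{L^2(\mu)}^2 = (m\beta)^{-1}\|\nabla g\|_{L^2(\nu)}^2$, and the Poincaré inequality~\eqref{eq:Poincare_inequality} for $\nu$ upgrades this to $a^2\|g\|_{L^2(\nu)}^2$ with $a^2 = R_\nu/(m\beta)$.

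The bulk of the work lies in bounding the two auxiliary operators. The range of $\Pi_1$ consists of first-Hermite functions $p \cdot G(q)$; on such functions $\gamma\LFD$ acts as the scalar $-\gamma/m$, so $\cLs_{11}$ is bounded. For $\cL_{21}\cLa_{10}(\cLa_{\subplus0}^*\cLa_{\subplus0})^{-1}$, a direct Hermite-decomposition calculation identifies the operator (up to explicit prefactors) with the map sending $f \in \cH_0$ to a second-Hermite function whose coefficient matrix is $\nabla^2 V$ applied to $\nabla\psi$, where $\psi$ solves the overdamped Poisson equation $-\Lovd \psi = f$ on $L^2(\nu)$. Its boundedness thus reduces to controlling $\|\nabla^2 V\,\nabla(-\Lovd)^{-1}\|_{\cB(L^2(\nu))}$. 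In case (i), convexity of $V$ together with a Bochner identity for $\Lovd$ gives this directly with $C=1$, $C'=0$. In case (ii), the semi-convexity bound $\nabla^2 V \geq -K\mathrm{Id}$ splits the Hessian into a nonnegative part (handled as in (i)) and a bounded correction of size $K$; the latter introduces an extra $K(-\Lovd)^{-1}$ term, controlled by $K/R_\nu$ via Poincaré, giving $C=1$, $C'=K$. In case (iii), no sign information on $\nabla^2 V$ is available, and one must invoke both inequalities in~\eqref{eq:regularization condition_d}: the pointwise bound $|\nabla^2 V|^2 \leq c_3^2(\dim + |\nabla V|^2)$ controls the Hessian in $L^2(\nu)$ once $\|\nabla V\|_{L^2(\nu)}^2$ is itself estimated via the first inequality and integration by parts against $\nu$, with the $\sqrt{\dim}$ factor coming from a Cauchy–Schwarz over the $\dim$ entries of the Hessian.

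The main obstacle is case (iii): the absence of any sign condition on $\nabla^2 V$ forces one through the global Bochner-type inequalities~\eqref{eq:regularization condition_d}, and the dimension-dependence must be tracked carefully to recover the $\sqrt{\dim}$ scaling expected from the separable benchmark. Once the three cases are settled, the bound~\eqref{eq:CL_inv_bound_Schur} follows by substituting $s = \gamma/m$, $a^2 = R_\nu/(m\beta)$, and the computed auxiliary-operator norms into the abstract resolvent estimate of~\cite[Corollary~1 and Proposition~1]{BFLS20}: the contribution $2\beta\gamma/R_\nu$ arises from the inverse Schur complement $\Schur^{-1}$ (dominant as $\gamma\to\infty$) and the contribution $8m/\gamma\,(3/8 + C + C'/R_\nu)$ from the dissipative block (dominant as $\gamma\to 0$).
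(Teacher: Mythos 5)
Your overall strategy is exactly the paper's: the paper does not prove Theorem~\ref{thm:Schur} itself but invokes Corollary~1 and Proposition~1 of~\cite{BFLS20}, and your plan is precisely to rerun that abstract Schur-complement argument for Langevin dynamics. Most of your structural inputs are correct: the momentum-reversal involution, microscopic coercivity with $s=\gamma/m$ from the Gaussian Poincar\'e inequality for~$\kappa$, macroscopic coercivity with $a^2=R_\nu/(m\beta)$ from~\eqref{eq:Poincare_inequality}, the boundedness of~$\cLs_{11}$ because $\LFD$ acts as $-1/m$ on functions of the form $p\cdot G(q)$, and the attribution of the two terms in~\eqref{eq:CL_inv_bound_Schur} to the Schur block and the dissipative block.

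There is, however, a genuine flaw in the step you yourself call the bulk of the work: the identification of $\cL_{21}\cLa_{10}(\cLa_{\subplus0}^*\cLa_{\subplus0})^{-1}$. Since $\cLa_{\subplus0}^*\cLa_{\subplus0}=\frac1m(-\Lovd)$ on $\cH_0$, for $f\in\cH_0$ one has $\cLa_{10}(\cLa_{\subplus0}^*\cLa_{\subplus0})^{-1}f=p\cdot\nabla\psi$ with $\psi=(-\Lovd)^{-1}f$, and
\[
\cL\left(p\cdot\nabla\psi\right)=\frac1m\,p^\top\nabla^2\psi\,p-\nabla V\cdot\nabla\psi-\frac{\gamma}{m}\,p\cdot\nabla\psi .
\]
The last two terms are annihilated by $\Pi_2$ (the middle one depends on $q$ only, the last lies in $\mathrm{Ran}\,\Pi_1$), so the operator maps $f$ to the traceless second-Hermite function whose coefficient matrix is $\nabla^2\psi$ -- not ``$\nabla^2 V$ applied to $\nabla\psi$'', which is a vector, not a matrix. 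Consequently the estimate actually required is the $H^2(\nu)$ elliptic regularity bound $\|\nabla^2(-\Lovd)^{-1}f\|_{L^2(\nu)}\lesssim\|f\|_{L^2(\nu)}$; the Hessian of $V$ enters only through the Bochner-type identity $\beta^2\|\Lovd u\|_{L^2(\nu)}^2=\|\nabla^2u\|_{L^2(\nu)}^2+\beta\int\nabla u^\top\nabla^2V\,\nabla u\,d\nu$, which yields $C=1$, $C'=0$ for convex $V$, then $C=1$, $C'=K$ by controlling $\|\nabla u\|^2$ through the spectral gap when $\nabla^2V\geq-K\,\mathrm{Id}$, and $C=2$, $C'=\mathrm{O}(\sqrt{d})$ in the general case by bounding the mixed term via~\eqref{eq:regularization condition_d} and absorbing half of $\|\nabla^2u\|^2$ (which is exactly where the factor~$2$ comes from). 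The quantity you propose to bound instead, $\|\nabla^2V\,\nabla(-\Lovd)^{-1}\|_{\cB(L^2(\nu))}$, is not what the framework asks for, and your case-(i) argument for it would fail as stated: convexity plus Bochner only controls $\|(\nabla^2V)^{1/2}\nabla\psi\|_{L^2(\nu)}$, so bounding $\nabla^2V\,\nabla\psi$ itself would in general require the growth conditions~\eqref{eq:regularization condition_d} even when $V$ is convex. With the identification corrected, the rest of your outline (Bochner identity, semiconvex correction via Poincar\'e, general case via~\eqref{eq:regularization condition_d} with the $\sqrt{d}$ bookkeeping) is indeed the argument of~\cite{BFLS20}.
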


The interest of the upper bound~\eqref{eq:CL_inv_bound_Schur} is that it is fully explicit in terms of the parameters of the dynamics (in particular the friction~$\gamma$ and the mass~$m$) and of the dimension~$\dim$ of the system, except for the dependence of the Poincar\'e constant on the dimension and on the potential~$V$. As discussed in~\cite[Section~3.1.4]{BFLS20}, a better scaling $C' = \mathrm{O}(\log d)$ can also be obtained when~$\nu$ satisfies a logarithmic Sobolev inequality and
\[
\forall q \in \R^d, \qquad \left\|\nabla^2V(q)\right\|_{\cB(\ell^2)} \leq c_3\left(1 + |\nabla V(q)|_{\infty}\right).
\]

An interesting observation is that the behavior of the resolvent bounds with respect to~$\gamma$ is consistent with the result of Theorem~\ref{th:hypocoercivity} together with~\eqref{eq:inv_by_int}; it is also sharp. Indeed, for the overdamped limit $\gamma\to +\infty$, we consider the following example:
\[
\cL \Big( p^\top \nabla V + \gamma (V-c_V)\Big) = p^\top M^{-1} \left(\nabla^2 V\right)p - |\nabla V|^2, 
\]
where $c_V$ is a constant chosen such that $p^\top \nabla V + \gamma (V-c_V)$ has a vanishing average with respect to~$\mu$. It is clear that the right hand side is of order~1, while the left-hand side is of order~$\gamma$ when $V$ is not constant. For the limit $\gamma \to 0$, we use the same argument as in~\cite[Proposition~6.3]{HP08}, and consider a function~$\varphi = \phi\circ H$, for which the following equality holds: $\gamma^{-1}\cL \varphi = \cLFD \varphi$.
Here again, the right hand side is of order~1, while the solution to the Poisson equation is of order~$\gamma^{-1}$. These two examples show that there exists $C > 0$ such that
\[
\left\|\cL^{-1}\right\|_{\cB(L^2_0(\mu))} \geq C \max(\gamma,\gamma^{-1}).
\]
In fact, it can be shown that $\cL^{-1}$ is at dominant order equal to $\gamma \cL_{\rm ovd}$ in the overdamped limit $\gamma\to+\infty$ (see~\cite[Theorem~2.5]{LMS16}).

As discussed in~\cite{BFLS20}, the abstract result leading to Theorem~\ref{thm:Schur} for Langevin dynamics can be extended to various other hypocoercive dynamics: Langevin dynamics with non-quadratic kinetic energies~\cite{ST18}, linear Boltzmann/randomized Hybrid Monte Carlo~\cite{BRSS17}, adaptive Langevin dynamics with a variable friction following some Nos\'e--Hoover feedback mechanism~\cite{Herzog18,LSS19}, etc. Some work is however needed to extend the approach to more degenerate dynamics such as generalized Langevin dynamics~\cite{OP11} or chains of oscillators~\cite{Menegaki19}.

\paragraph{Acknowledgements.}
The work of G.S. was funded in part from the European Research Council (ERC) under the European Union's Horizon 2020 research and innovation programme (grant agreement No 810367), and by the Agence Nationale de la Recherche, under grants ANR-19-CE40-0010-01 (QuAMProcs) and ANR-21-CE40-0006 (SINEQ).



\begin{thebibliography}{10}

\bibitem{AAS15}
F.~Achleitner, A.~Arnold, and D.~St{\"u}rzer.
\newblock Large-time behavior in non-symmetric {F}okker-{P}lanck equations.
\newblock {\em Riv. Math. Univ. Parma}, 6(1):1--68, 2015.

\bibitem{AM19}
D.~Albritton, S.~Armstrong, J.-C. Mourrat, and M.~Novack.
\newblock Variational methods for the kinetic {Fokker-Planck} equation.
\newblock {\em arXiv preprint}, 1902.04037, 2019.

\bibitem{AT17}
M.~P. Allen and D.~J. Tildesley.
\newblock {\em Computer Simulation of Liquids}.
\newblock Oxford University Press, Inc., 2nd edition, 2017.

\bibitem{ADNR18}
C.~Andrieu, A.~Durmus, N.~N{\"u}sken, and J.~Roussel.
\newblock Hypocoercivity of piecewise deterministic {M}arkov process-{M}onte
  {C}arlo.
\newblock {\em Ann. Appl. Probab.}, 31(5):2478--2517, 2021.

\bibitem{bakry-gentil-ledoux-14}
D.~Bakry, I.~Gentil, and M.~Ledoux.
\newblock {\em Analysis and Geometry of {M}arkov Diffusion Operators}.
\newblock Springer, 2014.

\bibitem{BFLS20}
E.~Bernard, M.~Fathi, A.~Levitt, and G.~Stoltz.
\newblock Hypocoercivity with {S}chur complements.
\newblock {\em arXiv preprint}, 2003.00726, 2020.
\newblock To appear in \emph{Annales Henri Lebesgue}.

\bibitem{Bhattacharya}
R.N. Bhattacharya.
\newblock On the functional {C}entral {L}imit theorem and the law of the
  iterated logarithm for {M}arkov processes.
\newblock {\em Z. Wahrscheinlichkeit}, 60(2):185--201, 1982.

\bibitem{BRSS17}
N.~Bou-Rabee and J.~M. Sanz-Serna.
\newblock Randomized {H}amiltonian {M}onte {C}arlo.
\newblock {\em Ann. Appl. Probab.}, 27(4):2159--2194, 2017.

\bibitem{BHM17}
E.~Bouin, F.~Hoffmann, and C.~Mouhot.
\newblock Exponential decay to equilibrium for a fiber lay-down process on a
  moving conveyor belt.
\newblock {\em SIAM J. Math. Anal.}, 49(4):3233--3251, 2017.

\bibitem{Brigati21}
G.~Brigati.
\newblock Time averages for kinetic {Fokker--Planck} equations.
\newblock {\em arXiv preprint}, 2106.12801, 2021.

\bibitem{CLW19}
Y.~Cao, J.~Lu, and L.~Wang.
\newblock On explicit {$L^2$}-convergence rate estimate for underdamped
  {L}angevin dynamics.
\newblock {\em arXiv preprint}, 1908.04746, 2019.

\bibitem{DPD18}
G.~Deligiannidis, D.~Paulin, and A.~Doucet.
\newblock Randomized {Hamiltonian Monte Carlo} as scaling limit of the {Bouncy
  Particle Sampler} and dimension-free convergence rates.
\newblock {\em arXiv preprint}, 1808.04299, 2018.
\newblock To appear in \emph{Annals of Applied Probability}.

\bibitem{DKMS13}
J.~Dolbeault, A.~Klar, C.~Mouhot, and C.~Schmeiser.
\newblock Exponential rate of convergence to equilibrium for a model describing
  fiber lay-down processes.
\newblock {\em Appl. Math. Res. eXpress}, 2013(2):165--175, 2013.

\bibitem{DMS09}
J.~Dolbeault, C.~Mouhot, and C.~Schmeiser.
\newblock Hypocoercivity for kinetic equations with linear relaxation terms.
\newblock {\em C. R. Math. Acad. Sci. Paris}, 347(9-10):511--516, 2009.

\bibitem{DMS15}
J.~Dolbeault, C.~Mouhot, and C.~Schmeiser.
\newblock Hypocoercivity for linear kinetic equations conserving mass.
\newblock {\em Trans. AMS}, 367(6):3807--3828, 2015.

\bibitem{EGZ19}
A.~Eberle, A.~Guillin, and R.~Zimmer.
\newblock Coupling and quantitative contraction rates for {L}angevin dynamics.
\newblock {\em Ann. Probab.}, 47(4):1982--2010, 2019.

\bibitem{EH03}
J.-P. Eckmann and M.~Hairer.
\newblock Spectral properties of hypoelliptic operators.
\newblock {\em Commun. Math. Phys.}, 235:233--253, 2003.

\bibitem{FrenkelSmit}
D.~Frenkel and B.~Smit.
\newblock {\em {Understanding Molecular Simulation: From Algorithms to
  Applications}}.
\newblock Academic Press, 2nd edition, 2002.

\bibitem{GS16}
M.~Grothaus and P.~Stilgenbauer.
\newblock Hilbert space hypocoercivity for the {L}angevin dynamics revisited.
\newblock {\em Methods Funct. Anal. Topology}, 22(2):152--168, 2016.

\bibitem{HM11}
M.~Hairer and J.~C. Mattingly.
\newblock Yet another look at {H}arris' ergodic theorem for {M}arkov chains.
\newblock In {\em Seminar on {S}tochastic {A}nalysis, {R}andom {F}ields and
  {A}pplications {VI}}, volume~63 of {\em Progr. Probab.}, pages 109--117.
  Birkh\"auser/Springer, 2011.

\bibitem{HP08}
M.~Hairer and G.~Pavliotis.
\newblock From ballistic to diffusive behavior in periodic potentials.
\newblock {\em J. Stat. Phys.}, 131:175--202, 2008.

\bibitem{HSV14}
M.~Hairer, A.~M. Stuart, and S.~J. Vollmer.
\newblock Spectral gaps for a {M}etropolis--{H}astings algorithm in infinite
  dimensions.
\newblock {\em Ann. Appl. Probab.}, 24(6):2455--2490, 2014.

\bibitem{Herau06}
F.~H\'{e}rau.
\newblock Hypocoercivity and exponential time decay for the linear
  inhomogeneous relaxation {B}oltzmann equation.
\newblock {\em Asymptot. Anal.}, 46(3-4):349--359, 2006.

\bibitem{Herau07}
F.~H\'erau.
\newblock Short and long time behavior of the {F}okker–{P}lanck equation in a
  confining potential and applications.
\newblock {\em J. Funct. Anal.}, 244(1):95--118, 2007.

\bibitem{HN04}
F.~H\'erau and F.~Nier.
\newblock Isotropic hypoellipticity and trend to equilibrium for the
  {F}okker--{P}lanck equation with a high-degree potential.
\newblock {\em Arch. Ration. Mech. Anal.}, 171:151--218, 2004.

\bibitem{Herzog18}
D.~P. Herzog.
\newblock Exponential relaxation of the {N}os\'{e}-{H}oover thermostat under
  {B}rownian heating.
\newblock {\em Commun. Math. Sci.}, 16(8):2231--2260, 2018.

\bibitem{Hor67}
L.~H{\"o}rmander.
\newblock Hypoelliptic second order differential equations.
\newblock {\em Acta Math.}, 119:147--171, 1967.

\bibitem{IOS19}
A.~{Iacobucci}, S.~{Olla}, and G.~{Stoltz}.
\newblock {Convergence rates for nonequilibrium {Langevin} dynamics}.
\newblock {\em Ann. Math. Qu\'ebec}, 43(1):73--98, 2019.

\bibitem{Kli87}
W.~Kliemann.
\newblock Recurrence and invariant measures for degenerate diffusions.
\newblock {\em Ann. Probab.}, 15(2):690--707, 1987.

\bibitem{LMS16}
B.~Leimkuhler, C.~Matthews, and G.~Stoltz.
\newblock The computation of averages from equilibrium and nonequilibrium
  {L}angevin molecular dynamics.
\newblock {\em IMA J. Numer. Anal.}, 36(1):13--79, 2016.

\bibitem{LSS19}
B.~Leimkuhler, M.~Sachs, and G.~Stoltz.
\newblock Hypocoercivity properties of adaptive {L}angevin dynamics.
\newblock {\em SIAM J. Appl. Math.}, 80(3):1197--1222, 2020.

\bibitem{LS16}
T.~Leli\`evre and G.~Stoltz.
\newblock Partial differential equations and stochastic methods in molecular
  dynamics.
\newblock {\em Acta Numerica}, 25:681--880, 2016.

\bibitem{MSH02}
J.~C. Mattingly, A.~M. Stuart, and D.~J. Higham.
\newblock Ergodicity for {SDE}s and approximations: locally {L}ipschitz vector
  fields and degenerate noise.
\newblock {\em Stochastic Process. Appl.}, 101(2):185--232, 2002.

\bibitem{Menegaki19}
A.~Menegaki.
\newblock Quantitative rates of convergence to non-equilibrium steady state for
  a weakly anharmonic chain of oscillators.
\newblock {\em J. Stat. Phys.}, 181(1):53–94, 2020.

\bibitem{OP11}
M.~Ottobre and G.~A. Pavliotis.
\newblock Asymptotic analysis for the generalized {L}angevin equation.
\newblock {\em Nonlinearity}, 24(5):1629--1653, 2011.

\bibitem{rey-bellet}
L.~Rey-Bellet.
\newblock Ergodic properties of {M}arkov processes.
\newblock In S.~Attal, A.~Joye, and C.-A. Pillet, editors, {\em Open Quantum
  Systems II}, volume 1881 of {\em Lecture Notes in Mathematics}, pages 1--39.
  Springer, 2006.

\bibitem{RS18}
J.~Roussel and G.~Stoltz.
\newblock Spectral methods for {L}angevin dynamics and associated error
  estimates.
\newblock {\em ESAIM Math. Model. Numer. Anal.}, 52(3):1051--1083, 2018.

\bibitem{ST18}
G.~Stoltz and Z.~Trstanova.
\newblock Langevin dynamics with general kinetic energies.
\newblock {\em Multiscale Model. Sim.}, 16(2):777--806, 2018.

\bibitem{Talay02}
D.~Talay.
\newblock Stochastic {H}amiltonian dissipative systems: exponential convergence
  to the invariant measure, and discretization by the implicit {E}uler scheme.
\newblock {\em Markov Proc. Rel. Fields}, 8:163--198, 2002.

\bibitem{Tuckerman}
M.~Tuckerman.
\newblock {\em {Statistical Mechanics: Theory and Molecular Simulation}}.
\newblock Oxford University Press, 2010.

\bibitem{Villani09}
C.~Villani.
\newblock Hypocoercivity.
\newblock {\em Mem. Amer. Math. Soc.}, 202(950), 2009.

\bibitem{Wu01}
L.~Wu.
\newblock Large and moderate deviations and exponential convergence for
  stochastic damping {H}amiltonian systems.
\newblock {\em Stoch. Proc. Appl.}, 91(2):205--238, 2001.

\end{thebibliography}
\end{document}